\documentclass[reqno]{amsart}
\usepackage{amsmath,amssymb,latexsym}
\usepackage{mathpazo}
\usepackage[mathscr]{eucal}
\usepackage{enumerate}
\usepackage[all]{xy}

\newcommand{\calW}{{\mathcal{W}}}

\newcommand{\calC}{\mathcal{C}}
\newcommand{\calF}{\mathcal{F}}

\newcommand{\calE}{\mathcal{E}}
\newcommand{\calJ}{\mathcal{J}}

\newcommand{\sfJ}{\mathsf{J}}

\newcommand{\frq}{\mathfrak{q}}

\newcommand{\frakm}{\mathfrak{m}}

\newcommand{\C}{\mathbb{C}}
\newcommand{\R}{\mathbb{R}}

\newcommand{\N}{\mathbb{N}}
\newcommand{\Z}{\mathbb{Z}}

\newcommand{\Tot}{\operatorname{Tot}}

\newcommand{\id}{\operatorname{id}}

\newcommand{\fWeyl}{\mathbb W}

\newcommand{\Sym}{\operatorname{Sym}}
\newcommand{\cSym}{\operatorname{\widehat{Sym}}}

\newcommand{\supp}{\operatorname{supp}}

\numberwithin{equation}{section}
\theoremstyle{plain}
        \newtheorem{theorem}{Theorem}[section]
        
        \newtheorem{proposition}[theorem]{Proposition}

\theoremstyle{definition}
        \newtheorem{definition}[theorem]{Definition}
        \newtheorem{remark}[theorem]{Remark}

\title{Quantization of Whitney functions}
\author{M.J.~Pflaum, H. Posthuma,~\textrm{and} X.~Tang}
\begin{document}
\begin{abstract}
  We propose to study deformation quantizations of Whitney functions. 
  To this end, we extend the notion of a deformation quantization to algebras of Whitney 
  functions over a singular set, and show the existence of a deformation quantization 
  of Whitney functions over a closed subset of a symplectic manifold. 
  Under the assumption that the underlying symplectic manifold is analytic and the 
  singular subset subanalytic, we determine that the Hochschild and cyclic homology of the 
  deformed algebra of Whitney functions over the subanalytic subset coincide with 
  the Whitney--de Rham cohomology.  
  Finally, we note how an algebraic index theorem for Whitney functions can be derived. 
\end{abstract}
\dedicatory{Dedicated to the memory of our friend and collaborator 
            Nikolai Neumaier}
\address{\newline
Markus J. Pflaum, {\tt markus.pflaum@colorado.edu}\newline
         \indent {\rm Department of Mathematics, University of Colorado,
         Boulder, USA}\newline
        Hessel Posthuma, {\tt h.b.posthuma@uva.nl}\newline
         \indent {\rm Korteweg-de Vries Institute for Mathematics, University
         of Amsterdam, The Netherlands} \newline
        Xiang Tang, {\tt xtang@math.wustl.edu}   \newline
         \indent {\rm  Department of Mathematics, Washington University,
         St.~Louis, USA}}
\maketitle
\section*{Introduction}
In physics, many interesting systems are described mathematically by 
phase spaces with singularities such as for example  the moduli spaces of 
flat connections on a Riemann surface. The study of such singular phase spaces
raises a very interesting question in mathematical physics. How does one
quantize a singular Poisson manifold? In his seminal paper \cite{KonDQPM}, 
Kontsevich completely solved the problem of constructing 
deformation quantizations of Poisson manifolds by his famous formality theorem.
However, the problem of proving a general existence theorem for 
deformation quantizations over singular spaces is still open 15 years 
later (see \cite{BorHerPflHASRDQ,HerPflIyeESPQSLHTA} for progress in this direction).

One of the key difficulties in the quantization theory of 
singular phase spaces spaces is that the algebra of smooth functions 
over  a space with singularities appears to be complicated to study 
since certain crucial results such as a de Rham Theorem or a 
Hochschild--Kostant--Rosenberg type theorem do in general not hold true
in the presence of singularities. 

In this paper, we propose to replace the algebra of smooth functions by the 
so-called Whitney functions, and discuss some examples of quantizations 
of Whitney functions.  

Let $M$ be a smooth manifold, and $X\subset M$ be a closed subset of $M$. 
A Whitney function on $X$, roughly speaking, is the (infinite) jet of a 
smooth function $f$ on $M$ at the subset $X$. We denote the algebra of 
Whitney functions on $X$ by $\calE^\infty(X)$. A Whitney--Poisson structure on 
$X$ is a Poisson structure on $\calE^\infty(X)$, i.e. an antisymmetric bilinear 
bracket $\{ - , - \}$ on $\calE^\infty(X)$ which is a derivation in each of its arguments 
and satisfies the Jacobi-identity. 
Several  interesting questions arise in the study of Whitney--Poisson structures. 
\begin{enumerate}
\item 
First observe that if a neighborhood of $X$ in $M$ is equipped with a Poisson bivector $\Pi$, 
then $\Pi$ naturally defines a Whitney--Poisson structure on  $X$. This construction usually 
provides 
various different Whitney--Poisson structures on $X$, which we will call global Whitney--Poisson structures. 
In general, is every Whitney--Poisson structure on $X$ a global one? This question is closely related to the 
existence of a normal form of a Poisson structure near $X$. We expect to see obstructions for a general $X$ 
in $M$, which is probably connected to the singularities of $X$ and the embedding of $X$ in $M$. 
\item 
Whitney functions naturally factorize to smooth functions on $X$. In general, a Whitney--Poisson structure 
does not factorize to  a Poisson structure on $X$ by which we mean an antisymmetric and bilinear 
bracket on $\calC^\infty (X)$ which is a derivation in each of its arguments and satisfies the 
Jacobi-identity.
It appears to be an interesting question to describe those Whitney--Poisson 
structures that do factorize to $X$. 
This problem appears to be  closely related to the question under which conditions one 
can embed a singular Poisson variety into a smooth Poisson manifold, see 
\cite{EgiEBD,DavisEBD,McMillanEBD}. 
\end{enumerate}
In this paper, we propose to study the problem of deformation quantization of Whitney--Poisson 
structures on $X$. We will construct a natural deformation quantization of a global 
Whitney--Poisson structure on $X$. Moreover, we study such a deformation quantization by 
computing its  Hochschild homology when the global Whitney--Poisson structure is symplectic 
using the methods developed in \cite{PPT2ADV}.

We would like to dedicate this short article to Nicolai Neumaier, who unfortunately passed away  
in Spring 2010 after a brave and long battle with cancer.  Nicolai has been a good friend and 
excellent collaborator. The idea to study the quantization of Whitney functions goes back to our 
collaboration in 2004 on deformation quantization of orbifolds \cite{nppt}.  
We are picking up this idea as a memory to Nicolai's important contribution to the subject of 
deformation quantization of singular spaces. 

\noindent{\bf Acknowledgments:} Pflaum is partially supported by NSF grant DMS 1105670, and 
Tang is partially supported by NSF grant DMS 0900985.


%
%
\section{Formal quantizations of  Whitney functions}
Assume to be given a smooth manifold $M$, and let $X\subset M$ be a  closed subset.
Denote by $\calJ^\infty (X,M) \subset \calC^\infty (M)$ the ideal of smooth functions on $M$ which are 
flat  on $X$, i.e.~the space of all $f \in \calC^\infty (M)$ such that for every differential 
operator $D$ on $M$ the restricted function $Df_{|X}$ vanishes. 
By Whitney's Extension Theorem, the quotient 
$\calE^\infty (X) := \calC^\infty (M) / \calJ^\infty (X,M)$ naturally coincides 
with the algebra of Whitney functions on $X$. This implies in particular that 
  $\calE^\infty (X) \subset \sfJ^\infty (X)$, where $\sfJ^\infty (X)$ denotes the space of 
infinite jets over $X$.
Now consider the complex $\Omega (M)$ of differential 
forms on $M$. Then the spaces $ \Omega^k_{\calJ^\infty}  (X,M) :=   \calJ^\infty (X,M) \cdot \Omega^k (M)$
are modules over $\calC^\infty (M)$ preserved by the exterior derivative $d$, which means that 
$d \big( \Omega^k_{\calJ^\infty}  (X,M) \big) \subset \Omega^{k+1}_{\calJ^\infty}  (X,M)$. 
One thus obtains a subcomplex $\Omega^\bullet_{\calJ^\infty}  (X,M) \subset \Omega^\bullet (M)$ 
which we call the complex of differential forms on $M$ which are flat on $X$. 
The quotient complex 
$ \Omega^\bullet_{\calE^\infty} (X) := \Omega^\bullet (M)/ \Omega^\bullet_{\calJ^\infty}  (X,M)$ will be called 
the complex of Whitney-de Rham forms on $X$. According to \cite{BraPfl}, the cohomology of 
$\Omega^\bullet_{\calE^\infty} (X)$ coincides with the singular cohomology (with values in $\R$), if $M$ 
is an analytic manifold, and $X\subset M$ a subanalytic subset.  

Let us now define what we understand by a deformation quantization of Whitney functions.
\begin{definition}
  Assume to be given a manifold $M$, a closed subset $X \subset M$ and a \emph{Whitney--Poisson} structure  
  on $X$, i.e.~a bilinear map $\{ - , - \}$ on $\calE^\infty (X)$ which satisfies  for all $F,G,H \in \calE^\infty (X)$
  the relations
 \begin{enumerate}[(P1)]
  \item $\{F, GH\}=\{F,G\}H+G\{F,H\}$, and 
  \item $\{\{F,G\}, H\}+\{\{H,F\},G\}+\{\{G,H\},F\}=0$.
\end{enumerate}
 
  By a \emph{formal deformation quantization} of the algebra $\calE^\infty (X)$  or in other words 
  a \emph{star product} on $\calE^\infty (X)$ we understand an associative product 
  \[ 
   \star : \calE^\infty (X)[[\hbar]] \times \calE^\infty (X)[[\hbar]] \rightarrow \calE^\infty (X)[[\hbar]] 
  \]
  on the space $\calE^\infty (X)[[\hbar]]$ of formal power series in the variable $\hbar$ with coefficients 
  in $\calE^\infty (X)$ such that the following is satisfied:
  \begin{enumerate}[(DQ1)]
  \setcounter{enumi}{-1}
  \item
      The product $\star $ is $\R[[\hbar]]$-linear and $\hbar$-adically continuous in each argument.
  \item
     There exist $\R$-bilinear operators $c_k : \calE^\infty (X) \times \calE^\infty (X) \rightarrow \calE^\infty (X)$, $k\in \N$ 
     such that $c_0 $ is the standard commutative product on $\calE^\infty (X)$ and such that for all $F,G \in \calE^\infty (X)$ 
     there is an expansion of the product $ F \star G $ of the form
     \begin{equation}
     \label{eq:starprodexp}
       F \star G = \sum_{k\in \N} c_k (F,G) \hbar^k .
     \end{equation}
  \item 
     The constant function $1 \in \calE^\infty$ satisfies $1 \star F = F \star 1 = F$  for all $F \in \calE^\infty (X)$.
  \item
     The star commutator $[ F, G]_\star := F\star G - G \star F$ of two Whitney functions $F,G \in \calE^\infty (X)$
     satisfies the  commutation relation 
     \[
        [ F, G]_\star = - i \hbar \{ F, G\} + o(\hbar^2) .
     \]
  \end{enumerate}
    If in addition $\star$ is local in the sense that 
   \begin{enumerate}[(DQ1)]
   \setcounter{enumi}{3}
  \item 
    $\supp (F \star G) \subset \supp (F) \cap \supp (G)$ for all $F,G \in \calE^\infty (X)$,
  \end{enumerate}
    then the star product is called \emph{local}. 
\end{definition}
\begin{remark}
  If $(M,\Pi)$ is a Poisson manifold, the ideal $\calJ^\infty (X;M)$ is a even Poisson 
  ideal in $\calC^\infty (M)$. 
  This implies that the Poisson bracket on $\calC^\infty (M)$ factors to the 
  quotient $\calE^\infty (X)$. 
  We denote the inherited Poisson bracket on $\calE^\infty (X)$ also by $\{ - , - \}$, and call it 
  \emph{global Whitney--Poisson structure}.  
\end{remark}

Assume now to be given a Poisson manifold $(M,\Pi)$, a  closed subset $X\subset M$, and let $\star$ be a local 
star product on $\calC^\infty (M)$. By Peetre's Theorem one then knows that each of the operators 
$c_k: \calC^\infty (M) \times  \calC^\infty (M) \rightarrow \calC^\infty (M)$ in the expansion 
Eq.~\eqref{eq:starprodexp} of the star product on  $\calC^\infty (M)$ is locally bidifferential. But this implies that 
for every $k\in \N$ the sets 
$c_k \big( \calJ^\infty (X,M ) \times \calC^\infty (M) \big) $ and $c_k \big( \calC^\infty (M) \times \calJ^\infty (X,M ) \big) $
are contained in $\calJ^\infty (X,M )$. This immediately entails the following result.

\begin{proposition}
  Let $(M,\Pi)$ be a Poisson manifold and $\star$ a local star product on $\calC^\infty (M)$. Then 
  for each closed subset $X \subset M$ the subspace  $\calJ^\infty (X,M )[[\hbar]]$ is an ideal in 
  $\big( \calC^\infty (M) , \star \big)$ which gives rise to an exact sequence of deformed algebras
  \[
    0 \rightarrow \big( \calJ^\infty (X,M )[[\hbar]] , \star \big)
      \rightarrow \big( \calC^\infty (M) , \star \big)  \rightarrow \big( \calE^\infty (X) , \star \big) 
      \rightarrow 0 ,
  \]
  where the induced star product on $\calE^\infty (X)$ is denoted by $\star$ as well. 
\end{proposition}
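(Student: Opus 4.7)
The proof is essentially packaged by the paragraph immediately preceding the proposition, so my plan is just to unwind that remark and assemble the pieces carefully. The starting point is Peetre's theorem: since $\star$ is local, each cochain $c_k$ is locally bidifferential, meaning that around every point $p\in M$ there is a chart in which
\[
 c_k(f,g) = \sum_{|\alpha|,|\beta|\le N_k} a^k_{\alpha\beta}\,\partial^\alpha f \cdot \partial^\beta g
\]
for some smooth coefficients $a^k_{\alpha\beta}$ and some local order bound $N_k$. A partition of unity lets me move freely between the local and global pictures, so I can work pointwise throughout.

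The first real step is to verify that each $c_k$ sends $\calJ^\infty(X,M)\times \calC^\infty(M)$ and $\calC^\infty(M)\times \calJ^\infty(X,M)$ into $\calJ^\infty(X,M)$. By definition $f\in\calJ^\infty(X,M)$ means that $Df$ vanishes on $X$ for every differential operator $D$, equivalently that every derivative $\partial^\alpha f$ vanishes on $X$ in any chart. Given any differential operator $D$, the Leibniz rule applied to the local expression above writes $D(c_k(f,g))$ as a finite sum of terms each of which carries a factor of the form $\partial^{\alpha'}f$; hence every such term vanishes on $X$, which gives $c_k(f,g)\in\calJ^\infty(X,M)$. The argument in the other slot is symmetric. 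This is the only computational content of the proposition, and it is routine; there is no real obstacle, beyond being careful that "locally bidifferential" plus flatness really does imply flatness of the output.

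With this in hand the rest is assembly. For $F=\sum_i F_i\hbar^i\in\calJ^\infty(X,M)[[\hbar]]$ and $G=\sum_j G_j\hbar^j\in\calC^\infty(M)[[\hbar]]$, the $\hbar$-adic continuity assumption (DQ0) gives
\[
 F\star G = \sum_{n\ge 0}\hbar^n\!\!\sum_{i+j+k=n} c_k(F_i,G_j),
\]
and each summand lies in $\calJ^\infty(X,M)$ by the previous step, so $F\star G\in\calJ^\infty(X,M)[[\hbar]]$; the right-ideal property is identical. Thus $\calJ^\infty(X,M)[[\hbar]]$ is a two-sided $\star$-ideal inside $(\calC^\infty(M)[[\hbar]],\star)$, and passing to the quotient produces an associative $\R[[\hbar]]$-bilinear product $\bar\star$ on $\calE^\infty(X)[[\hbar]]=\calC^\infty(M)[[\hbar]]/\calJ^\infty(X,M)[[\hbar]]$, which is precisely the short exact sequence claimed. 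Finally I would remark that the quotient product automatically inherits (DQ0)--(DQ3) and is local, so it is indeed a star product quantizing the induced global Whitney--Poisson structure on $X$.
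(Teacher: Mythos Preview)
Your proof is correct and follows exactly the paper's approach, which is contained in the paragraph immediately preceding the proposition: Peetre's theorem makes each $c_k$ locally bidifferential, hence $c_k$ preserves $\calJ^\infty(X,M)$ in each slot, and the rest is formal assembly. You have simply spelled out the details the paper leaves implicit.
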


\begin{remark}
  One knows by the work of {\sc Fedosov} \cite{FedDQIT} that every symplectic manifold carries a local star product, and 
  by {\sc Kontsevich} \cite{KonDQPM} that on every Poisson manifold there exists  a local star product. 
  The proceeding proposition then entails that for every closed subset $X$ of a Poisson manifold  $(M,\Pi)$
  there exists a deformation quantization of $\calE^\infty (X)$ with the induced global Whitney--Poisson structure.
\end{remark}

Let us briefly recall Fedosov's approach \cite{FedDQIT} for the construction of  a deformation quantization over a symplectic 
manifold $(M,\omega)$ and use this to describe the induced star product on $\calE^\infty (X)$ with $X\subset M$ closed
in more detail. 
To this end, observe first that each of the tangent spaces $T_pM$ is a linear symplectic space,
hence  gives rise to the formal Weyl algebra $\fWeyl  (T_p M ) $. As a vector space, $\fWeyl  (T_p M ) $
coincides with $\cSym (T^*_pM)[[\hbar]]$, the space of formal power series in $\hbar$ with coefficients
in the space of  Taylor expansions at the origin of smooth functions on $T_pM$. Note that  $\cSym (T^*_pM)$
coincides with the $\frakm$-adic completion of the space $\Sym (T^*_pM)$ of polynomial functions on $T_pM$,
where $\frakm $ denotes the maximal ideal in $\Sym (T^*_pM)$. In other words this means that $\cSym (T^*_pM)$
can be identified with the product $\prod_{s\in \N} \Sym^s (T^*_pM)$, where $\Sym^s (T^*_pM)$ denotes the space 
of $s$-homogenous polynomial functions on $T_pM$. Hence every element $a$ of $\fWeyl  (T_p M ) $ can be uniquely 
expressed in the form
\begin{equation}
\label{eq:fWeylrep}
  a = \sum_{s\in \N , \, k\in \N} a_{s,k} \hbar^k ,
\end{equation}
where the $a_{s,k} \in \Sym^s (T^*_pM)$ are uniquely defined by $a$. For later purposes note that 
$\fWeyl  (T_p M ) $ is filtered by the \emph{Fedosov-degree}
\[
  \deg_\textup{F} (a) := \min\{s +2k \mid a_{sk}\neq 0 \}, \quad 
  a \in \fWeyl  (T_p M ). 
\]
Next observe that the Poisson bivector $\Pi$ on $T_pM$ is linear and can be 
written in the form  
\begin{equation}
\label{eq:Poissonrep}
	  \Pi = \sum_{i=1}^{\frac{\dim T_pM}{2}} \Pi_{i1} \otimes \Pi_{i2} \quad \text{with $\Pi_{i1},\Pi_{i2} \in T_pM$, 
            $i=1,\cdots , \frac{\dim T_pM}{2}$}.  
\end{equation}
Since the elements of $T_pM$ act as derivations on $\Sym (T_pM)$ one obtains an operator
\begin{equation}
\begin{split}
  \widehat{\Pi} : \: & \Sym (T_pM) \otimes \Sym (T_pM) \rightarrow \Sym (T_pM) \otimes \Sym (T_pM), \\ 
  & a \otimes b \mapsto \sum_{i=1}^{\frac{\dim T_pM}{2}} \Pi_{i1} a \otimes \Pi_{i2} b ,
\end{split}
\end{equation}
which does not depend on the particular representation \eqref{eq:Poissonrep}. Note that by $\C[[\hbar]]$-linearity
and $\frakm$-adic continuity, $\widehat{\Pi}$ uniquely extends to an operator
\[
 \widehat{\Pi} :  \cSym (T_pM)[[\hbar]] \otimes \cSym (T_pM)[[\hbar]] 
  \rightarrow  \cSym (T_pM)[[\hbar]] \otimes \cSym (T_pM)[[\hbar]] .
\] 
The product of two elements $a, b \in \fWeyl  (T_p M ) $ can now be written 
down. It is the so-called \emph{Moyal--Weyl} product of $a$ and $b$ and is 
given  by
\begin{equation}
  a \circ b := \sum \frac{ (- i \hbar)^k}{k!} \mu \big( \widehat{\Pi} 
  ( a \otimes b ) \big) .
\end{equation}
One checks easily that $\circ$ is a star product on $\fWeyl  (T_p M )$. 

Denote by $\fWeyl (M) $ the bundle of formal Weyl algebras over $M$, which is 
the  (profinite dimensional) vector bundle over $M$ having fibers 
$ \fWeyl  (T_p M ) $, $p\in M$. 
Furthermore, let $\Omega^\bullet \fWeyl  $
be the sheaf of smooth differential forms with  values in the bundle $\fWeyl (M)$.
Note that both the space $\calW (M)$ of smooth sections of  $\fWeyl (M) $
and the space $\Omega^\bullet \fWeyl (M)$ are filtered by the Fedosov-degree. More precisely, 
the Fedosov filtration $\big( \calF^k \calW (M) \big)_{k\in \N}$ of $\calW (M)$ is given by
\[
   \calF^k \calW (M) := 
   \{ a \in \calW (M) \mid \deg_\textup{F} (a(p)) \geq k \text{ for all $p \in M$}  \} ,
\]
and similarly for $\Omega^\bullet \fWeyl (M)$. 
Note also that an element $a \in \calW (M)$ can be uniquely written
in the form \eqref{eq:fWeylrep}, where the $a_{s,k}$ with $s,k\in\N$ then are 
smooth sections of the symmetric powers $\Sym^s (T^*M)$. This representation 
allows us to define the \emph{symbol map} 
$\sigma: \calW \rightarrow \calC^\infty (M)[[\hbar]]$ by 
\[
  \sigma (a ) = \sum_{k\in \N} a_{0,k} \hbar^k \quad \text{for $a\in \calW$}.
\]

Next, choose a a symplectic connection $\nabla$ on $M$, i.e.~a connection on $M$ which satisfies 
$\nabla \omega = 0$. The symplectic connection canonically lifts to a connection 
\[
  \nabla : \Omega^\bullet \fWeyl (M ) \rightarrow \Omega^{\bullet + 1} \fWeyl ( M ).
\]
By Fedosov's construction, there exists a section $A \in \Omega^1 \fWeyl (M)$ 
such that the connection
\begin{equation}
  \label{eq:FedConn}
   D : = \nabla + \frac{i}{\hbar} [ - , A ]
\end{equation}
is abelian, i.e.~satisfies $D \circ D = 0$. The $1$-form $A$ is even uniquely 
determined by the latter property, if one additionally requires that 
$\deg_\textup{F} (A) \geq 2$. The connection $D$ defined by such a  $1$-form $A$
will be called a \emph{Fedosov connection}.

As has been observed by Fedosov \cite{FedDQIT}, the space 
\[
 \calW_D (M) := \{ a \in \calW (M) \mid Da = 0 \}
\]
of flat sections of the Weyl algebras bundle gives rise to  a deformation quantization of $\calC^\infty (M)$ via
 the symbol map
\[
  \sigma :  \calW (M) \rightarrow \calC^\infty (M)[[\hbar]],  \quad a = \sum_{s\in \N, k \in \N} a_{s,k} \hbar^k \mapsto
   \sum_{k \in \N} a_{0,k} \hbar^k .
\]
More precisely, if the $1$-form $A$ has been chosen as above, the restriction
\[
  \sigma_{|\calW_D (M)} :  \calW_D (M) \rightarrow \calC^\infty (M)[[\hbar]]
\]
is a linear isomorphism. Let 
\[
  \frq :   \calC^\infty (M)[[\hbar]]  \rightarrow \calW_D (M)
\]
be its inverse, the so-called \emph{quantization map}. 
Then there exist uniquely determined differential operators 
$\frq_k :  \calC^\infty (M) \rightarrow  \calC^\infty (M)$ such that 
\begin{equation}
\label{eq:expquantmap}
   \frq (f) = \sum_{k\in \N} \frq_k (f) \hbar^k \quad 
   \text{for all $f \in \calC^\infty (M)$},
\end{equation}
and 
\[
  \star : \calC^\infty (M)[[\hbar]]  \times \calC^\infty (M)[[\hbar]] , \quad
  (f , g) \mapsto \sigma \big(  \frq (f) \circ \frq (g) \big)
\] 
is a star product on $\calC^\infty (M)$.

Now observe that the Fedosov connection $D$ leaves the module 
$\calJ^\infty (X;M) \cdot \Omega^\bullet (M;\fWeyl M)$ invariant.   
This implies that $D$ factors to the quotient 
\[ 
  \Omega^\bullet_{\calE^\infty} (X;\fWeyl M) := 
  \Omega^\bullet (M;\fWeyl M) / \calJ^\infty (X;M) \cdot \Omega^\bullet (M;\fWeyl M),
\]
and acts on 
$\calE^\infty (X;\fWeyl M) := \calW ( M) / \calJ^\infty (X;M) \cdot \calW  (M)$. 
Moreover, the symbol map $\sigma$ maps  
$\calJ^\infty (X;M)\cdot \calW (M)$ to $\calJ^\infty (X;M)[[\hbar]]$, and 
$\frq \big( \calJ^\infty (X;M)[[\hbar]]\big)$ is contained in $\calJ^\infty (X;M)\cdot \calW (M)$,
since in the expansion \eqref{eq:expquantmap} the operators $\frq_k$ are all differential operators.  
Hence $\sigma$ and $\frq$ factor to $\calE^\infty (X;\fWeyl M)$ respectively 
$\calE^\infty (X)[[\hbar]]$. This entails the following result.

\begin{theorem}
  Let $(M,\omega)$ be a symplectic manifold, $D$ a Fedosov connection on $\Omega^\bullet \fWeyl $,
  and $X\subset M$ a closed subset. Then the space of flat sections 
  \[
    \calW_D (X) := \{ a \in \calE^\infty (X;\fWeyl M)  \mid Da = 0\}
  \] 
  is a subalgebra of $\calE^\infty (X;\fWeyl M)$, and the symbol map induces an isomorphism of linear 
  spaces 
  $\sigma_X : \calW_D (X) \rightarrow \calE^\infty(X)[[\hbar]]$. Moreover, the unique product $\star$ on 
  $\calE^\infty(X)[[\hbar]]$ with respect to which $\sigma_X$ becomes an isomorphism of algebras 
  is  a formal deformation quantization of $\calE^\infty(X)$.
\end{theorem}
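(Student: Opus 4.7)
The plan is to transport Fedosov's construction on $\calC^\infty (M)$ to $\calE^\infty(X)$ by quotienting out the Whitney-flat ideal at every stage. The two essential facts already assembled in the excerpt are: (i) the Fedosov connection $D$ preserves the subcomplex $\calJ^\infty (X;M)\cdot \Omega^\bullet (M;\fWeyl M)$; and (ii) both $\sigma$ and $\frq$ respect $\calJ^\infty (X;M)$, the latter because each $\frq_k$ in the expansion \eqref{eq:expquantmap} is a differential operator and differential operators send $\calJ^\infty (X;M)$ to itself.

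First I would verify that $\calJ^\infty (X;M)\cdot \calW(M)$ is a two-sided ideal in $\bigl(\calW(M),\circ\bigr)$. The Moyal--Weyl product is fiberwise bidifferential in the symmetric-algebra directions and acts tensorially on the base-function factors, so if $a$ lies in $\calJ^\infty (X;M)\cdot \calW(M)$ then so do $a\circ b$ and $b\circ a$ for every $b \in \calW(M)$. Consequently the quotient $\calE^\infty (X;\fWeyl M)$ inherits an associative product, again denoted $\circ$. Since $D$ descends to this quotient (by (i)) and remains a $\circ$-derivation, its kernel $\calW_D(X)$ is a subalgebra.

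Next I would show that $\sigma_X$ is a linear isomorphism by factoring the pair $(\sigma,\frq)$ through the Whitney ideal. By (ii), both $\sigma$ and $\frq$ induce maps $\bar\sigma: \calE^\infty(X;\fWeyl M)\to \calE^\infty(X)[[\hbar]]$ and $\bar\frq: \calE^\infty(X)[[\hbar]]\to \calE^\infty(X;\fWeyl M)$. Because $D\circ \frq = 0$ at the level of global sections, the same holds after passing to the quotient, so $\bar\frq$ actually lands in $\calW_D(X)$. The identities $\sigma\circ \frq = \id$ and $\frq \circ \sigma_{|\calW_D(M)} = \id$ descend to the quotient and exhibit $\bar\frq$ as a two-sided inverse of $\sigma_X := \bar\sigma_{|\calW_D(X)}$.

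With the isomorphism in hand, one defines the star product by transport of structure: $F\star G := \sigma_X\bigl(\bar\frq(F)\circ \bar\frq(G)\bigr)$. Associativity and $\R[[\hbar]]$-linearity are automatic, and $\hbar$-adic continuity together with (DQ1)--(DQ2) follow because the bidifferential operators $c_k$ of Fedosov's star product on $\calC^\infty(M)$ preserve $\calJ^\infty (X;M)$ in each argument and therefore descend to bilinear operators on $\calE^\infty(X)$ giving the expansion \eqref{eq:starprodexp}, with $c_0$ the commutative product and $\bar\frq(1)=1$. The classical limit condition (DQ3) is then inherited from the corresponding statement on $M$ together with the preceding Remark identifying the induced bracket with the global Whitney--Poisson structure determined by $\omega$.

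The main obstacle I anticipate is step two, namely making sure that $\bar\frq$ is genuinely well defined and genuinely takes values in $\calW_D(X)$. This is not quite formal: one must use that the Fedosov iteration producing $\frq$ is built out of maps (the symplectic connection, $\delta$ and $\delta^{-1}$, and fiberwise multiplications) each of which preserves $\calJ^\infty(X;M)\cdot \calW(M)$, so that the recursion defining $\frq(f)$ modulo the ideal depends only on the class of $f$ modulo $\calJ^\infty(X;M)[[\hbar]]$. Once this is verified, the remaining steps are routine consequences of descent through the quotient.
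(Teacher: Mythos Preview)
Your proposal is correct and follows essentially the same route as the paper: the theorem is stated immediately after the discussion recording facts (i) and (ii), with the words ``This entails the following result,'' and no further proof is given. Your write-up simply makes explicit the descent argument that the paper leaves to the reader; in particular, the ``main obstacle'' you flag at the end is already neutralized by fact (ii) itself, since the paper's point is precisely that the expansion $\frq(f)=\sum_k \frq_k(f)\hbar^k$ with each $\frq_k$ a differential operator guarantees $\frq\big(\calJ^\infty(X;M)[[\hbar]]\big)\subset \calJ^\infty(X;M)\cdot\calW(M)$, so no separate analysis of the Fedosov recursion is needed.
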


\section{Hochschild and cyclic homology}
The Hochschild homology of algebras of Whitney functions $\calE^\infty (X)$ has been computed for a large 
class of singular subspaces $X\subset M$ in \cite{BraPfl}. In particular, it follows from this work that 
for (locally) subanalytic sets $X\subset M$ with $M$ an analytic manifold the Hochschild homology of 
$\calE^\infty (X)$ is given by
\begin{equation}
  \label{eq:HHWhitneyFunc}
  HH_\bullet \big( \calE^\infty (X) \big) =  \Omega^\bullet_{\calE^\infty} (X) .
\end{equation}

In case $(M , \omega)$ is symplectic of dimension $2m$, and $\star$  
a star product on $\calC^\infty (M)$, the 
Hochschild homology  of the deformed algebra $\big( \calC^\infty (M)((\hbar)),\star \big)$ was first computed in \cite{NesTsyAIT}. (We extend the star product $\star$ on $\calC^\infty(M)[[\hbar]]$ to $\calC^\infty(M)((\hbar))$). It is given by  
\begin{equation}
  \label{eq:HHDefAlgSmooth}
  HH_\bullet \big(  \calC^\infty (M)((\hbar))\big) = H^{2m -\bullet}_\textup{dR} (M, \C((\hbar)) ). 
\end{equation}
If $X \subset M$ now is closed, the natural question  arises what the Hochschild homology  of the 
deformed algebra of Whitney functions  $\big( \calE^\infty (X)((\hbar)), \star \big)$ then is.  
Observe that via Teleman's localization technique \cite{Tlocal}, the Hochschild and cyclic homology of $\calE^\infty(X)$ and $\calE^\infty(X)((\star))$ (and also of $\calC^\infty(M)$ and $\calC^\infty(M)((\hbar))$) can be computed as the sheaf cohomology of the corresponding sheaf complexes for 
Hochschild and cyclic complexes on $X$ (and on $M$) as is explained in \cite{BraPfl}.  

We start the computation of the homology groups by first noting that $\calE^\infty (X)((\hbar))$ carries a filtration 
$\big( \calF^k_\hbar \calE^\infty (X)((\hbar))\big)_{k\in \Z}$ by the $\hbar$-degree. More precisely,
\[
   \calF^k_\hbar \calE^\infty (X)((\hbar)) = 
   \{ F \in \calE^\infty (X)((\hbar)) \mid \deg_\hbar F \geq k  \},
\]
where the $\hbar$-degree of $F = \sum_{k\in\Z} F_k \hbar^k \in  \calE^\infty (X)((\hbar))$ with
$F_k  \in  \calE^\infty (X)$ is given by 
\[
  \deg_\hbar (F) = \min \{ k\in \Z \mid F_k \neq 0 \}.
\]
The $\hbar$-filtration of $\calE^\infty (X)((\hbar))$ induces  a filtration 
$\big( \calF^k_\hbar C^\bullet (\calE^\infty (X)((\hbar)) \big)_{k\in \Z}$ of the 
Hochschild chain complex $C^\bullet (\calE^\infty (X)((\hbar)))$ which then gives rise to a
spectral sequence $E^\bullet_{pq}$. Since 
\[
  \calF^{k+1}_\hbar \calE^\infty (X)((\hbar)) / \calF^k_\hbar \calE^\infty (X)((\hbar))) 
  \cong \calE^\infty (X) ,
\]
the $E^1$-term has to coincide with the Hochschild homology of $\calE^\infty (X)$, hence
\begin{equation}
  \label{eq:E1term}
  E^1_{pq} = \Omega^q_{\calE^\infty} (X) .
\end{equation}
Since $\calE^\infty (X)$ is the quotient of $\calC^\infty (M)$ by the ideal 
$\calJ^\infty (X;M)$, it follows from \cite[Sec.~3]{BryDCPM}  that the differential 
$d^1_{pq} : \Omega^q_{\calE^\infty} (X) \rightarrow \Omega^{q-1}_{\calE^\infty} (X)$
coincides with the canonical derivative 
\[
\begin{split}
   \delta : \: & \Omega^q_{\calE^\infty} (X) \rightarrow \Omega^{q-1}_{\calE^\infty} (X), \quad
     f_0 \, df_1 \wedge \ldots \wedge df_q \mapsto \\
     & \sum_{i=1}^q (-1)^{i+1} \{ f_o,f_i \}
     df_1 \wedge \ldots \wedge \widehat{d f_i} \wedge \ldots \wedge df_q +  \\
     & \sum_{1 \leq i < j \leq q} (-1)^{i+j}  f_o d \{ f_i , f_j \} \wedge 
     df_1 \wedge \ldots \wedge \widehat{d f_i} \wedge \ldots \wedge 
     \widehat{d f_j} \wedge \ldots \wedge df_q .
\end{split}
\]
Next let us recall Brylinski's definition of the symplectic Hodge $*$-operator 
(see \cite{BryDCPM}). 
Let $\nu$ be the volume form $\frac{1}{m!} \omega^m$ over $M$, and 
$\Lambda^k\Pi $ the operator 
\[
\begin{split}
  & \Omega^k M \times \Omega^k M \rightarrow \calC^\infty (M),\quad \\
  & (f_0df_1 \wedge \ldots \wedge df_k , g_0dg_1  \wedge \ldots \wedge dg_k) \mapsto
   f_0 g_0 \, \big( \Pi  \lrcorner  \, df_1 \wedge dg_1 \big) \cdot \ldots \cdot  
   \big( \Pi \lrcorner \, df_k \wedge dg_k \big)  .
\end{split}
\]
The symplectic $*$-operator 
$* : \Omega^k (M) \rightarrow \Omega^{2m-k} (M)$  now is uniquely defined by 
requiring that $\alpha \wedge (* \beta) = \Lambda^k\Pi (\alpha , \beta) \, \nu$ for all
$\alpha,\beta \in \Omega^k (M)$. Obviously, $*$ leaves the 
$\calJ^\infty (X;M) \cdot \Omega^\bullet (M)$ invariant, hence induces an operator 
$* : \Omega^k_{\calE^\infty} (M) \rightarrow \Omega^{2m-k}_{\calE^\infty} (M)$ which by the  
properties of the corresponding operator on $\Omega^\bullet (M)$ satisfies the 
equality $* \circ * = \id $. By \cite{BryDCPM} it also follows that on $\Omega^k_{\calE^\infty} (X)$ 
the canonical differential $\delta$ is equal to $( - 1 )^{k+1} * d * $. But this implies 
by \cite{BraPfl} that
\begin{equation}
  \label{eq:E2term}
  E^2_{pq} = H^{2m-q} (X) .
\end{equation}
Under the assumption that $X$ is compact subanalytic, there exists a finite triangulation of $X$, 
hence the singular cohomology with values in $\R$, and by \cite{BraPfl} the 
periodic cyclic homology of $\calE^\infty (X)$ then have to be finite dimensional. 
Arguing like in \cite{NesTsyAIT}, one concludes that under this assumption on $X$, 
the spectral sequence degenerates at $E^2$, and the Hochschild homology of the deformed 
algebra $\calE^\infty (X)[[\hbar]]$ is given by \eqref{eq:E2term}. Let us show that this holds 
even in more generality.

For this more refined computation of the Hochschild and cyclic homology of $\calE^\infty(X)$, we use a specific quasi-isomorphism implementing the isomorphism \eqref{eq:HHDefAlgSmooth} above. In \cite{PPT2ADV}, we have constructed morphisms
\[
\Psi^i_{2k}:\calC_{2k-i}\left(\calC^\infty(M)((\hbar)),\star\right)\to\Omega^i(M)((\hbar)), 
\]
satisfying the property
\begin{equation}\label{eq:psi}
(-1)^id\circ \Psi^i_{2k}=\Psi^{i+1}_{2k}\circ b+\Psi^{i+1}_{2k+2}\circ B,
\end{equation}
where $b$ and $B$ are the Hochschild and Connes' $B$-operator computing cyclic homology. 
>From \cite[Thm 2.4]{PPTLMP}, it follows by Eq.~(\ref{eq:psi}) that the combination $\Psi_i:=\sum_{l\geq 0}\Psi^{2m-2l-i}_{2m-2l}$ defines an S-morphism $\Psi_\bullet $ of complexes of sheaves
\[
\Psi_\bullet:{\rm Tot}_\bullet\left(\mathcal{BC}\left(\calC^\infty(M)((\hbar)),\star\right),b+B\right)\to \left(\bigoplus_{l\geq 0}\Omega^{2m-2l-\bullet}(M)((\hbar)), (-1)^{2m-2l-\bullet}d\right),
\]
where on the left we have the total sheaf complex of Connes' $(b,B)$-complex
(cf. \cite[Prop. 2.5.15]{Loday} for more on S-morphisms).

\begin{proposition} \label{prop:psi}
$\Psi^i_{2k}$ maps $C_{2k-i}\left(\calJ^\infty(X,M)((\hbar))\right)$ to 
$\Omega^i_{\calJ^\infty}(M)((\hbar))$.
\end{proposition}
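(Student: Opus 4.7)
The strategy is to extract enough structural information about the maps $\Psi^i_{2k}$ from \cite{PPT2ADV} to see that they are built from multidifferential operators, and then to invoke the two elementary facts that $\calJ^\infty(X,M)$ is closed under arbitrary differential operators and is an ideal in $\calC^\infty(M)$.

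Concretely, I would first recall that $\Psi^i_{2k}$ is constructed via the Fedosov resolution: given a Hochschild chain $a_0\otimes a_1\otimes\cdots\otimes a_{2k-i}$, one lifts each $a_j$ to its image $\frq(a_j)\in\calW_D(M)$ under the quantization map, then applies a fiberwise multilinear operation built out of the Moyal--Weyl product, contractions with the Poisson bivector, and the symbol map $\sigma$, landing in $\Omega^i(M)((\hbar))$. Since every coefficient $\frq_k$ in the expansion \eqref{eq:expquantmap} is a differential operator, and the Moyal--Weyl product is a formal power series of bidifferential operators, unwinding these operations allows one to write
\[
  \Psi^i_{2k}(a_0\otimes\cdots\otimes a_{2k-i})
  =\sum_\alpha P_\alpha(a_0,\ldots,a_{2k-i})\,\omega_\alpha,
\]
where each $\omega_\alpha\in\Omega^i(M)$ and each $P_\alpha$ is a multidifferential operator with coefficients in $\C((\hbar))$.

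Next I would note the two elementary closure properties of the ideal: by its very definition, $\calJ^\infty(X,M)$ is stable under the action of every differential operator on $M$ (since the composition of two differential operators is again a differential operator), and, being the kernel of the surjection $\calC^\infty(M)\to\calE^\infty(X)$, it is a two-sided ideal in $\calC^\infty(M)$. A Leibniz expansion then shows that any multidifferential operator $P$ sends a tuple containing at least one entry in $\calJ^\infty(X,M)$ into $\calJ^\infty(X,M)$: all derivatives falling on that entry produce functions still flat on $X$, and the remaining factors only multiply with them.

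Applied to a chain $a_0\otimes\cdots\otimes a_{2k-i}\in C_{2k-i}(\calJ^\infty(X,M)((\hbar)))$ -- where by hypothesis every $a_j$ lies in $\calJ^\infty(X,M)((\hbar))$ -- each coefficient $P_\alpha(a_0,\ldots,a_{2k-i})$ lies in $\calJ^\infty(X,M)((\hbar))$, so
\[
  \Psi^i_{2k}(a_0\otimes\cdots\otimes a_{2k-i})\in\calJ^\infty(X,M)\cdot\Omega^i(M)((\hbar))=\Omega^i_{\calJ^\infty}(M)((\hbar)),
\]
as claimed. I expect the principal technical hurdle to be making the multidifferential structure of $\Psi^i_{2k}$ sufficiently explicit from the construction in \cite{PPT2ADV} (which invokes twisted cocycles and a somewhat involved Fedosov-type trace); once this locality is in hand, the closure properties of $\calJ^\infty(X,M)$ make the conclusion immediate.
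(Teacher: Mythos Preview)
Your proposal is correct and follows essentially the same approach as the paper's own proof. The paper's argument is terser---it simply records the two observations that (i) the Fedosov--Taylor series defining $\frq$ involves only partial derivatives, so $\frq$ maps $\calJ^\infty(X,M)$ into $\calJ^\infty(X,M)\cdot\calW(M)$, and (ii) $\Psi^i_{2k}$ is given by contraction of a fiberwise cyclic cocycle on the Weyl algebra with the Fedosov connection $D$---whereas you unpack these into the explicit statement that $\Psi^i_{2k}$ is a sum of multidifferential operators times fixed forms; but the substance is the same.
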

\begin{proof}
The proof is given by two observations: first, since the Fedosov--Taylor series defining the 
quantization map $\frq:\calC^\infty(M)\to \calW(M)$ only involves partial derivatives, it will 
map $\calJ^\infty(X,M)$ to $\calJ^\infty(X,M)\cdot\calW(M)$. Second, we see from \cite{PPT2ADV} 
that 
$\Psi^i_{2k}$ is given by contraction of an explicitly given cyclic cocycle on the formal Weyl 
algebra acting fiberwise on $\fWeyl(M)$, with the Fedosov connection $D$. From this, the result 
is obvious.
\end{proof}

Proposition \ref{prop:psi} proves that the S-morphism $\Psi_\bullet$ descends to define an 
S-morphism of complexes of sheaves on $X$
\[
\Psi_\bullet:{\rm Tot}_\bullet\left(\mathcal{BC}\left(\calE^\infty(X)((\hbar)),\star\right),b+B\right)
\to \left(\bigoplus_{l\geq 0}\Omega^{2m-2l-\bullet}_{\calE^\infty}(X)((\hbar)), (-1)^{2m-2l-\bullet}d\right).
\]
\begin{theorem}\label{thm:homology} 
Let $(M,\omega)$ be a real analytic symplectic manifold, and $X \subset M$ a subanalytic subset. 
Then the S-morphism $\Psi_\bullet$ defined above is a quasi-isomorphism, and therefore 
\begin{eqnarray*}
&HH_\bullet(\calE^\infty(X)((\hbar)), \star)&=H^{2m-\bullet}(X)((\hbar)),\\
&HC_\bullet(\calE^\infty(X)((\hbar)),\star)&=\bigoplus_{k\geq 0}H^{2m-\bullet-2k}(X)((\hbar)).
\end{eqnarray*}
\end{theorem}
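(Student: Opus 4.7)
The plan is to combine the $\hbar$-adic spectral sequence set up earlier in this section with the explicit $S$-morphism $\Psi_\bullet$ from Proposition \ref{prop:psi} in order to prove that $\Psi_\bullet$ is a quasi-isomorphism, from which both homology formulas follow by taking cohomology of the target Whitney--de Rham complex. By Teleman's localization \cite{Tlocal} (see also \cite{BraPfl}), both sides of $\Psi_\bullet$ can be computed as hypercohomology on $X$, so it suffices to prove that $\Psi_\bullet$ is a quasi-isomorphism of sheaf complexes on $X$.

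For the Hochschild piece, I would revisit the $\hbar$-filtration spectral sequence already analyzed above. As established, $E^1_{pq} = \Omega^q_{\calE^\infty}(X)$ by the Brasselet--Pflaum Hochschild--Kostant--Rosenberg theorem \cite{BraPfl}, with $d^1$ equal to Brylinski's canonical derivation $\delta$. Applying the symplectic Hodge $*$-operator converts $\delta$ into $(-1)^{k+1} * d *$ and identifies $E^2_{pq}$ with $H^{2m-q}(X)$ under the subanalyticity hypothesis. The target of $\Psi_\bullet$, a shifted Whitney--de Rham complex, computes exactly this cohomology. Thus the theorem reduces to (i) establishing degeneration at $E^2$ of the source spectral sequence and (ii) showing that the associated graded of $\Psi_\bullet$ realizes the Hodge $*$ identification at $E^2$.

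To handle both points, I would use the short exact sequence of deformed algebras
\[
 0 \to \calJ^\infty(X,M)((\hbar)) \to \calC^\infty(M)((\hbar)) \to \calE^\infty(X)((\hbar)) \to 0,
\]
together with its counterpart at the level of Whitney--de Rham complexes. By Proposition \ref{prop:psi}, $\Psi_\bullet$ restricts to the ideal and descends to the quotient, producing a morphism of short exact sequences of sheaf complexes on $X$. For the middle term $\calC^\infty(M)((\hbar))$, the quasi-isomorphism property of $\Psi_\bullet$ is the content of \cite{NesTsyAIT, PPT2ADV}. An analogous argument applies to $\calJ^\infty(X,M)((\hbar))$, using the HKR-type computation of $HH_\bullet(\calJ^\infty(X,M))$ from \cite{BraPfl} together with the observation that Brylinski's Hodge $*$ preserves the ideal $\calJ^\infty(X;M)\cdot\Omega^\bullet(M)$. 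The five-lemma on the resulting long exact sequences in Hochschild/cyclic homology then yields the desired quasi-isomorphism on the quotient $\calE^\infty(X)((\hbar))$.

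The main obstacle is controlling the spectral sequence for the ideal $\calJ^\infty(X,M)((\hbar))$; this is where both the analyticity of $M$ and the subanalyticity of $X$ are essential, both for the HKR-type computation of the undeformed Hochschild homology of $\calJ^\infty$ and for the identification of its Whitney--de Rham cohomology with relative singular cohomology of $(M,X)$. Once the quasi-isomorphism is established, the Hochschild formula $HH_\bullet = H^{2m-\bullet}(X)((\hbar))$ follows by taking cohomology of a single summand $\Omega^{2m-\bullet}_{\calE^\infty}(X)((\hbar))$ of the target, while the cyclic formula $HC_\bullet = \bigoplus_{k\geq 0} H^{2m-\bullet-2k}(X)((\hbar))$ reflects the full direct sum appearing in the definition of $\Psi_\bullet$.
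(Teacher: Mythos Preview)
Your overall strategy---use the $\hbar$-filtration spectral sequence, identify $E^1$ via \cite{BraPfl}, and recognize the symplectic Hodge $*$ at $E^2$---matches the paper. The gap is in how you execute steps (i) and (ii). You propose to deduce the quasi-isomorphism for $\calE^\infty(X)((\hbar))$ from the short exact sequence $0\to\calJ^\infty\to\calC^\infty\to\calE^\infty\to 0$ and a five-lemma. But a short exact sequence of algebras does \emph{not} produce a short exact sequence of Hochschild chain complexes: the kernel of $C_\bullet(\calC^\infty(M))\to C_\bullet(\calE^\infty(X))$ is $\sum_i A^{\otimes i}\otimes I\otimes A^{\otimes(n-i)}$, not $C_\bullet(I)$. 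To get a long exact sequence in $HH$ and $HC$ you would need Wodzicki excision, which requires verifying H-unitality of $(\calJ^\infty(X,M)((\hbar)),\star)$---a nontrivial point you do not address. Even granting excision, your ``analogous argument'' for the ideal $\calJ^\infty(X,M)((\hbar))$ is not available off the shelf: \cite{BraPfl} computes $HH_\bullet(\calE^\infty(X))$, not $HH_\bullet(\calJ^\infty(X,M))$, and establishing the deformed version for $\calJ^\infty$ would require exactly the same filtered spectral-sequence comparison you are trying to avoid. So the five-lemma route is either circular or at least no easier than the direct argument.

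The paper's proof bypasses all of this. Since $\Psi$ is an S-morphism it suffices to treat the Hochschild component $\Psi^\bullet_{2m}$. Because $\Psi^\bullet_{2m}$ is a morphism of sheaf complexes on $X$, one may localize to the intersection of $X$ with a Darboux chart $U$. There $\Psi^\bullet_{2m}$ respects the $\hbar$-filtrations on source and target, hence induces a map of the associated spectral sequences; one then checks that on the $E^2$-page it is exactly the symplectic Hodge $*$-operator $(\Omega^\bullet_{\calE^\infty}(X\cap U),\delta)\to(\Omega^{2m-\bullet}_{\calE^\infty}(X\cap U),d)$, which is an isomorphism by Brylinski's identity $\delta=(-1)^{k+1}*d*$ and \cite{BraPfl}. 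This single filtered comparison simultaneously proves degeneration at $E^2$ and identifies $\Psi$ with the Hodge $*$, without ever invoking the ideal $\calJ^\infty$ or excision.
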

\begin{proof}
The proof is essentially a repetition of the arguments \cite[Theorem 3.9]{PPT2ADV}. Since $\Psi$ 
is an S-morphism, it suffices to check that 
$\Psi^i_{2m}: C_{i}(\calE^\infty(X)((\hbar)))\to \Omega^{2m-i}_{\calE^\infty}((\hbar))$ is a 
quasi-isomorphism. Since $\Psi^i_{2m}$ is a morphism of complexes of sheaves, we only need to 
check that $\Psi^i_{2m}$ is a quasi-isomorphism on a sufficiently nice local chart of $X$, 
which we choose to be the  intersection of a Darboux chart $U$ of $M$ with $X$. 

We note that $\Psi^i_{2m}$ is compatible with the $\hbar$-filtrations on the Hochschild complexes 
$C_{i}(\calE^\infty(X)((\hbar)))$ and $\Omega^{2m-i}_{\calE^\infty}((\hbar))$, and therefore induces a natural morphism between the spectral sequences associated to the $\hbar$-filtrations. To prove that $\Psi^\bullet_{2m}$ is a quasi-isomorphism, it suffices to check that $\Psi^\bullet_{2m}$ is a quasi-isomorphism at the $E^2$-level of the spectral sequences associated to the $\hbar$-filtrations. Over $U$, the algebra $(\calC^\infty(U)((\hbar)), \star )$ can be identified with the standard Weyl algebra. 
In addition, the $E^2$-level of the spectral sequence associated to the Hochschild complex of 
$(\calC^\infty(U)((\hbar)), \star) $ is the Poisson homology complex $(\Omega^\bullet(U)((\hbar)),\delta)$. Similarly, the $E^2$-level associated to $(\calE^\infty(X)((\hbar)),\star )$ is again the Poisson homology complex $(\Omega^\bullet_{\calE^\infty}(X)((\hbar)), \delta)$. Under this identification,  $\Psi^i_{2m}$ becomes the symplectic Hodge star operator, which is an isomorphism between the Poisson homology and the de Rham cohomology in (\ref{eq:E2term}).
\end{proof}
\begin{remark}
Theorem \ref{thm:homology} has a natural generalization to deformation quantizations of 
global Whitney--Poisson structures on $X$ using the method in \cite{DolgADV}, i.e.
\begin{eqnarray*}
&HH_\bullet(\calE^\infty(X)((\hbar)), \star)&=H_\bullet^\pi(X)((\hbar)),\\
&HP_\bullet(\calE^\infty(X)((\hbar)),\star)&=H_\bullet (X)((\hbar)),
\end{eqnarray*}
where $H_\bullet^\pi(X)((\hbar))$ is the Poisson homology of $(X, \pi)$. We leave the details to diligent readers. 
\end{remark}

\begin{remark}
It is easy to see that the so-called ``algebraic index theorem'' \cite{NesTsyAIT} descends to the level of Whitney functions: consider the morphism 
\[
  \mu: C_\bullet\left(\calE^\infty(X)[[\hbar]],\star \right)\to \Omega^\bullet_{\calE^\infty}(X)
\] 
given by
\[
\mu(f_0\otimes\ldots\otimes f_k):=\left. f_0df_1\wedge\ldots\wedge df_k\right|_{\hbar=0},
\]
where $\calE^\infty(X)[[\hbar]]$ is viewed as an algebra over $\C$. 
This map sends the Hochschild differential $b$ to zero and intertwines $B$ with the Whitney--de Rham operator $d$. The previously defined quasi-isomorphism $\Psi$ naturally extends to define a chain morphism 
\[
 \Psi: \Tot_\bullet\left(\mathcal{BC}(\calE^\infty(X)[[\hbar]],\star)\right)\longrightarrow \bigoplus_{l\geq 0}\Omega^{2m-2l-\bullet}_{\calE^\infty}(X)((\hbar)).
\]
The algebraic index theorem gives the defect of the map $\mu$ to agree with the 
morphism $\Psi$:
\begin{theorem}
Under the assumptions of Thm.~\ref{thm:homology} 
the following diagram commutes after taking homology: 
\[
  \xymatrix{\Tot_\bullet\left(\mathcal{BC}(\calE^\infty(X)[[\hbar]],\star)\right)\ar[rr]^{\mu}\ar[drr]_{\Psi}&&
  \bigoplus_{l\geq 0}\Omega^{2m-2l-\bullet}_{\calE^\infty}(X)\ar[d]^{\wedge\hat{A}(M)e^{-\Omega\slash 2\pi\sqrt{-1}\hbar}}
\\
& &
\bigoplus_{l\geq 0}\Omega^{2m-2l-\bullet}_{\calE^\infty}(X)((\hbar))} .
\]
Hereby, $\hat{A}(M)$ is the standard $\hat{A}$-class of $M$ associated to the symplectic structure, and $\Omega$ is the characteristic class of the star product $\star$ on $M$. 
\end{theorem}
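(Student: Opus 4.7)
The plan is to deduce the theorem from the algebraic index theorem on the ambient symplectic manifold $M$ (Nest--Tsygan \cite{NesTsyAIT}, as realized at the chain level in \cite{PPT2ADV}) by descending along the surjection $\calC^\infty(M) \twoheadrightarrow \calE^\infty(X)$. On $\calC^\infty(M)((\hbar))$ the analogous diagram is already known to commute on homology, so it suffices to check that every ingredient---both maps $\mu$ and $\Psi$, the characteristic class multiplication, and crucially the chain homotopy witnessing the commutativity---respects the Whitney ideal $\calJ^\infty(X,M)$ and therefore descends to the quotient.

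Concretely, I would proceed in the following steps. First, the map $\mu$ manifestly descends: if any of $f_0,\ldots,f_k$ lies in $\calJ^\infty(X,M)$, then $f_0\,df_1\wedge\cdots\wedge df_k$ lies in $\calJ^\infty(X,M)\cdot\Omega^k(M)$, either through the prefactor $f_0$ or because $\calJ^\infty(X,M)$ is stable under exterior differentiation. Second, $\Psi$ descends by Proposition \ref{prop:psi}. Third, the classes $\hat{A}(M)$ and $\Omega$ are ordinary smooth (in fact analytic) forms on $M$, so wedging with $\hat{A}(M)\,e^{-\Omega/2\pi\sqrt{-1}\hbar}$ preserves the submodule $\Omega^\bullet_{\calJ^\infty}(X,M)((\hbar))$ and hence induces a well-defined operator on $\Omega^\bullet_{\calE^\infty}(X)((\hbar))$.

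The core step is to verify that the explicit chain homotopy $h$ constructed in \cite{PPT2ADV} between $\mu$ and $(\wedge\hat{A}(M)e^{-\Omega/2\pi\sqrt{-1}\hbar})\circ\Psi$ on $\calC^\infty(M)((\hbar))$ also preserves the Whitney ideal. This homotopy is assembled from three building blocks: (a) the Fedosov quantization map $\frq$, which by the expansion \eqref{eq:expquantmap} is a formal series of differential operators and therefore sends $\calJ^\infty(X,M)$ into $\calJ^\infty(X,M)\cdot\calW(M)$; (b) a fiberwise cyclic cocycle on the formal Weyl algebra, which is multilinear in its Weyl-algebra arguments and therefore carries $\calJ^\infty(X,M)\cdot\calW(M)$ into $\calJ^\infty(X,M)\cdot\Omega^\bullet(M)$; and (c) contractions with the Fedosov connection $D$, which leaves $\calJ^\infty(X,M)\cdot\Omega^\bullet(M;\fWeyl M)$ invariant. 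Combining these observations, $h$ descends to a chain homotopy on the Whitney side, which yields the commutativity on homology.

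The main obstacle is not a conceptual one but a careful bookkeeping task: one must open up the construction of the chain homotopy in \cite{PPT2ADV}, identify each individual piece, and verify ideal-invariance one by one. A cleaner presentation may be obtained by working sheaf-theoretically on $X$ from the start, so that the entire diagram at the $\calE^\infty(X)$-level is literally the quotient of the $\calC^\infty(M)$-diagram by the subsheaf coming from $\calJ^\infty(X,M)$; then naturality of the algebraic index theorem under this quotient gives the result immediately, and the proof reduces to checking that $h$ descends, which is what the previous paragraph supplies.
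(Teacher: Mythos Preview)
Your proposal is correct and matches the paper's own approach. The paper does not give a detailed proof of this theorem; it simply asserts that the algebraic index theorem on $M$ ``descends to the level of Whitney functions,'' and your argument makes that descent explicit by checking that $\mu$, $\Psi$, the wedge with $\hat{A}(M)e^{-\Omega/2\pi\sqrt{-1}\hbar}$, and the chain homotopy from \cite{PPT2ADV} all respect the ideal $\calJ^\infty(X,M)$---the last of these being precisely the mechanism behind Proposition~\ref{prop:psi}.
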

As a consequence,  the following equality holds true in $H^\bullet(X)((\hbar))$:
\[
 \Psi(a)= \left([\hat{A}(M)]\cup [e^{\sqrt{-1}\Omega\slash 2\pi\hbar}]\right)\cup \mu(a),
\]
for all $a=a_0\otimes\ldots\otimes a_k\in C_k(\calE^\infty(X),\star)$.
\end{remark}


\bibliographystyle{alpha}

\end{document}